\newfont{\bcb}{msbm10}
\newfont{\matb}{cmbx10}
\newfont{\got}{eufm10}
\newcommand{\matR}{\mathbb{R}}
\newcommand{\matN}{\mathbb{N}}
\newcommand{\matC}{\mathbb{C}}
\newtheorem{theorem}{Theorem}[section]
\newtheorem{lemma}[theorem]{Lemma}
\newtheorem{corollary}[theorem]{Corollary}
\theoremstyle{definition}
\newtheorem{example}[theorem]{Example}
\theoremstyle{remark}
\newtheorem{remark}[theorem]{Remark}
\numberwithin{equation}{section}
\begin{document}

\title[quantifier elimination in quasianalytic structures]
{A counterexample concerning quantifier elimination in quasianalytic structures}

\author[Krzysztof Jan Nowak]{Krzysztof Jan Nowak}

\subjclass[2010]{Primary: 03C10, 26E10; Secondary: 14P15, 03C64.}

\keywords{Quasianalytic structures, quantifier elimination,
Denjoy--Carleman classes, rectilinearization of terms.}

\begin{abstract}
This paper provides an example of a quasianalytic structure which,
unlike the classical analytic structure, does not admit quantifier
elimination in the language of restricted quasianalytic functions
augmented by the reciprocal function $1/x$. It also demonstrates
that \L{}ojasiewicz's theorem that every subanalytic curve is
semianalytic is no longer true in quasianalytic structures. Our
construction applies rectilinearization of terms, established in
our earlier papers, as well as some theorems on power substitution
for Denjoy--Carleman classes and on non-extendability of
quasianalytic function germs. The last result relies on
Grothendieck's factorization and open mapping theorems for
(LF)-spaces.
\end{abstract}

\maketitle

\section{Introduction}

In our earlier papers~\cite{Now2,Now4}, we established, for a
quasianalytic structure,  quantifier elimination as well as
rectilinearization and description of definable functions by
terms, in the language augmented by rational powers. Full
generality was achieved in the latter paper, which constitutes a
basis for a new article being currently prepared. The main purpose
of this paper is to provide a negative answer to the problem,
formulated in~\cite{Now4}, whether a quasianalytic structure
admits quantifier elimination in the language augmented merely by
the reciprocal function $1/x$. In the case of the classical
structure $\mathbb{R}_{an}$, the affirmative answer was given by
J.~Denef and L.~van den Dries~\cite{Denef-Dries}. The construction
of a counterexample, given in Section~4, is based on the
rectilinearization of terms, established in our earlier
paper~\cite{Now3} (see also~\cite{Now4}), as well as on two
function-theoretic results concerning Denjoy--Carleman classes.
The first, presented in Section~2, refers to power substitution
for those classes. The other is a non-extendability theorem, which
is a refinement of a theorem by V.~Thilliez~\cite{Thil-2}.

\vspace{1ex}

Let us recall (cf.~\cite{Now1,Now2,Now5,Now6}) that a
quasianalytic structure $\mathbb{R}_{\mathcal{Q}}$ is the
expansion of the real field by restricted $\mathcal{Q}$-analytic
functions (abbreviated to $\mathcal{Q}$-functions) determined by a
quasianalytic system $\mathcal{Q} =(\mathcal{Q}_{m})_{m\in \matN}$
of sheaves of local $\matR$-algebras of smooth functions on
$\matR^{m}$, subject to the following six conditions:

\begin{enumerate}
  \item each algebra $\mathcal{Q}(U)$ contains the
      restrictions of polynomials;
  \item $\mathcal{Q}$ is closed under composition, i.e.\ the
      composition of $\mathcal{Q}$-maps is a $\mathcal{Q}$-map (whenever it is
      well defined);
  \item $\mathcal{Q}$ is closed under inverse, i.e.\ if
      $\varphi : U \longrightarrow V$ is a $\mathcal{Q}$-map between
      open subsets $U,V \subset \matR^{n}$, $a \in U$, $b \in
      V$ and if $\partial \varphi/\partial x (a) \neq 0$, then
      there are neighbourhoods $U_{a}$ and $V_{b}$ of $a$ and
      $b$, respectively, and a $\mathcal{Q}$-diffeomorphism $\psi: V_{b}
      \longrightarrow U_{a}$ such that $\varphi \circ \psi$ is
      the identity map on $V_{b}$;
  \item $\mathcal{Q}$ is closed under differentiation;
  \item $\mathcal{Q}$ is closed under division by a
      coordinate, i.e.\ if a function $f \in {\mathcal{Q}}(U)$
      vanishes for $x_{i}=a_{i}$, then $f(x)=
      (x_{i}-a_{i})g(x)$ with some $g \in \mathcal{Q}(U)$;
  \item $\mathcal{Q}$ is quasianalytic, i.e.\ if $f \in
      {\mathcal Q}(U)$ and the Taylor series $\widehat{f}_{a}$
      of $f$ at a point $a \in U$ vanishes, then $f$ vanishes
      in the vicinity of $a$.
\end{enumerate}

%\vspace{1ex}

Note that $\mathcal{Q}$-analytic maps (abbreviated to
$\mathcal{Q}$-maps) give rise, in the ordinary manner, to the
category $\mathcal{Q}$ of $\mathcal{Q}$-manifolds, which is a
subcategory of that of smooth manifolds and smooth maps.
Similarly, $\mathcal{Q}$-analytic, $\mathcal{Q}$-semianalytic and
$\mathcal{Q}$-subanalytic sets can be defined. The above
conditions ensure some (limited) resolution of singularities in
the category $\mathcal{Q}$, including transformation to a normal
crossing by blowing up (cf.~\cite{BM,Rol-Spei-Wil}), upon which
the geometry of quasianalytic structures relies; especially, in
the absence of their good algebraic properties
(cf.~\cite{Rol-Spei-Wil,Now1,Now2,Now4}). Consequently, the
structure $\mathbb{R}_{\mathcal{Q}}$ is model complete and
o-minimal. Its definable subsets coincide with those subsets of
$\matR^{n}$, $n \in \matN$, that are $\mathcal{Q}$-subanalytic in
a semialgebraic compactification of $\matR^{n}$. On the other
hand, every polynomially bounded, o-minimal structure $\mathcal R$
determines a quasianalytic system of sheaves of germs of smooth
functions that are locally definable in $\mathcal R$.

\vspace{1ex}

The examples of such categories are provided by quasianalytic
Denjoy--Carleman classes $\mathcal{Q}_{M}$, where $M=(M_{n})_{n\in
\matN}$ are increasing sequences with $M_{0}=1$. The class
$\mathcal{Q}_{M}$ consists of smooth functions $f(x) =
f(x_{1},\ldots,x_{m})$ in $m$ variables, $m \in \matN$, which are
locally submitted to the following growth condition for their
derivatives:
$$ \left| \partial^{|\alpha|} f
/\partial x^{\alpha} (x) \right| \leq C \cdot
   R^{|\alpha|} \cdot |\alpha|! \cdot M_{|\alpha|} \ \ \
   \mbox{ for all } \ \ \alpha \in \matN^{n}, $$
with some constants $C, R >0 $ depending only on the vicinity of a
given point. This growth condition is often formulated in a
slightly different way:
$$ \left| \partial^{|\alpha|} f
/\partial x^{\alpha} (x) \right| \leq C \cdot
   R^{|\alpha|} \cdot M_{|\alpha|}' \ \ \
   \mbox{ for all } \ \ \alpha \in \matN^{n}, $$
where $M_{n}' = n! M_{n}$. Obviously, the class $\mathcal{Q}_{M}$
contains the real analytic functions.

\vspace{1ex}

In order to ensure some important algebraic and analytic
properties of the class $\mathcal{Q}_{M}$, it suffices to assume
that the sequence $M$ or $M'$ is log-convex. The latter implies
that it is closed under multiplication (by virtue of the Leibniz
formula). The former assumption is stronger and implies that it is
closed under composition (Roumieu~\cite{Rou}) and under inverse
(Komatsu~\cite{Kom}); see also~\cite{BM}. Hence the set
$\mathcal{Q}_{m}(M)$ of germs at $0 \in \mathbb{R}^{m}$ of
$\mathcal{Q}_{M}$-analytic functions is a local ring. Then,
moreover, the class $\mathcal{Q}_{M}$ is quasianalytic iff
$$ \sum_{n=0}^{\infty} \, \frac{M_{n}}{(n+1)M_{n+1}} = \infty $$
(the Denjoy--Carleman theorem; see e.g.~\cite{Ru}), and is closed
under differentiation and under division by a coordinate iff
$$ \sup_{n} \, \sqrt[n]{\left( \frac{M_{n+1}}{M_{n}} \right)} <
   \infty $$
(cf.~\cite{M2,Thil}). It is well-known (cf.~\cite{C,C-M,Thil})
that, given two log-convex sequences $M$ and $N$, the inclusion
$\mathcal{Q}_{m}(M) \subset \mathcal{Q}_{m}(N)$ holds iff there is
a constant $C>0$ such that $M_{n} \leq C^{n}N_{n}$ for all $n \in
\matN$ or, equivalently,
$$ \sup \, \left\{  \sqrt[n]{\frac{M_{n}}{N_{n}}}: \
   n \in \matN \right\} < \infty. $$

\section{Power substitution for Denjoy--Carleman classes}

Consider an increasing sequence $M=(M_{n})$ of real numbers with
$M_{0}=1$. Let $I$ be an interval (open or closed) contained in
$\matR$. By $\mathcal{Q}(I,M)$ we denote the class of functions on
$I$ that satisfy the following growth condition for their
derivatives:
$$ \left| \partial^{|\alpha|} f
/\partial x^{\alpha} (x) \right| \leq C \cdot
   R^{|\alpha|} \cdot |\alpha|! \cdot M_{|\alpha|} \ \ \
   \mbox{ for all } \ \ \alpha \in \matN^{n}, $$
with some constants $C, R >0$.

\vspace{1ex}

The main purpose of this section is to prove the following

\begin{theorem}\label{substitution}
Let $p >1$ be an integer and $I$ the interval $[0,1]$ or $[-1,1]$
according as $p$ is even or odd. Consider power substitution $x =
\xi^{p}$, which is a bijection of $I$ onto itself. Let $f: I
\longrightarrow \matR$ be a smooth function. If
$$ F(\xi) := (f \circ \varphi)(\xi) = f(\xi^{p}) \in
   \mathcal{Q}(I,M), $$
then \ \ $f(x) \in \mathcal{Q}(I,M^{(p)})$, \ where the sequence
$M^{(p)}$ is defined by putting $M^{(p)}_{n}:= M_{pn}$.

Equivalently, in terms of the corresponding sequences $M'$, the
function $f$ belongs to the class determined by the sequence
$M'^{(p)}_{n}:= 1/n^{(p-1)n} \cdot M'_{pn}$, $n \in \mathbb{N}$.
\end{theorem}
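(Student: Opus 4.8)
The plan is to reduce everything to a single exact identity for the derivatives of $f$ and then estimate. Writing $\xi = x^{1/p}$ (with the convention $\xi = \operatorname{sgn}(x)|x|^{1/p}$ when $p$ is odd), the chain rule gives $d/dx = p^{-1}\xi^{1-p}\,d/d\xi$ away from $\xi=0$, so iteration yields, for every $n$,
\[
 f^{(n)}(x) = \frac{1}{p^{n}} \sum_{k=1}^{n} c_{n,k}\, \xi^{k-pn}\, F^{(k)}(\xi), \qquad \xi \neq 0 ,
\]
with integer coefficients $c_{n,k}$ independent of $f$. First I would identify them: by the Fa\`a di Bruno formula applied to $h(x)=x^{1/p}$, the $c_{n,k}$ are the partial Bell polynomials $B_{n,k}$ evaluated at the arguments $a_j = h^{(j)}(x)\,x^{j-1/p} = \tfrac1p(\tfrac1p-1)\cdots(\tfrac1p-j+1)$. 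Since $|a_j|\le j!$, monotonicity of $B_{n,k}$ in its (nonnegative-coefficient) arguments together with the classical evaluation $B_{n,k}(1!,2!,\dots)=\binom{n-1}{k-1}\tfrac{n!}{k!}$ (the Lah numbers) furnishes the crucial combinatorial bound $|c_{n,k}| \le \binom{n-1}{k-1}\, n!/k!$.

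Next I would kill the negative powers of $\xi$. Substituting into each $F^{(k)}$ its Taylor expansion at $0$ to order $pn$, with exact integral remainder,
\[
 F^{(k)}(\xi) = \sum_{i=0}^{pn-k-1} \frac{F^{(k+i)}(0)}{i!}\, \xi^{i} + \operatorname{Rem}_{k}(\xi), \qquad \operatorname{Rem}_{k}(\xi) = \frac{1}{(pn-k-1)!}\int_{0}^{\xi} (\xi-t)^{pn-k-1} F^{(pn)}(t)\, dt ,
\]
the polynomial parts contribute to the sum only \emph{strictly negative} powers $\xi^{k+i-pn}$ (as $k+i\le pn-1$). Now $f^{(n)}(x)$ stays bounded as $\xi\to0$, while each remainder term $\xi^{k-pn}\operatorname{Rem}_{k}(\xi)$ tends to the finite limit $F^{(pn)}(0)/(pn-k)!$; hence the sum of all negative-power contributions is a Laurent polynomial in $\xi$ bounded near $0$, so it vanishes identically. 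This is the one genuinely structural step — the cancellation is forced by the special form $F=f(\xi^{p})$ (equivalently $F^{(j)}(0)=0$ for $p\nmid j$) \emph{only through} boundedness of the left-hand side, so no analyticity is required. I am left with the clean representation $f^{(n)}(x) = p^{-n}\sum_{k} c_{n,k}\, \xi^{k-pn}\operatorname{Rem}_{k}(\xi)$, now valid on all of $I$.

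It then remains to estimate. The integral remainder obeys $|\xi^{k-pn}\operatorname{Rem}_{k}(\xi)| \le \frac{1}{(pn-k)!}\sup_{[0,\xi]}|F^{(pn)}|$, the powers of $\xi$ cancelling exactly; using $F\in\mathcal{Q}(I,M)$ this is at most $\frac{1}{(pn-k)!}\,C R^{pn}(pn)!\,M_{pn}$. Combining with the Lah bound and summing,
\[
 |f^{(n)}(x)| \le C\,\frac{R^{pn}}{p^{n}}\, M_{pn} \sum_{k=1}^{n} \binom{n-1}{k-1} \frac{n!}{k!}\, \frac{(pn)!}{(pn-k)!} ,
\]
and I would bound the sum using $\binom{n-1}{k-1}\le n^{k-1}/(k-1)!$ and $(pn)!/(pn-k)!\le(pn)^{k}$, which collapses it to a Bessel-type series $\sum_{k}(pn^{2})^{k}/(k!)^{2}\le e^{2\sqrt{p}\,n}$ times $n!$. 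This gives $|f^{(n)}(x)| \le C\,(C')^{n} n!\, M_{pn}$, i.e.\ $f\in\mathcal{Q}(I,M^{(p)})$. The equivalent formulation in terms of $M'$ follows immediately, since the dominant factor $1/(pn-k)!$ at $k=n$ equals $1/((p-1)n)!$, which Stirling's formula converts into the asserted weight $n^{-(p-1)n}$.

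The main obstacle, I expect, is twofold: pinning the $c_{n,k}$ down tightly enough to obtain the Lah-number bound (the crude estimate $|c_{n,k}|\lesssim p^{n}n!$ loses a whole factor of $n!$ and is fatal here), and justifying the exact cancellation of the singular terms by the boundedness argument rather than by an illegitimate appeal to power-series expansions of the merely smooth $f$. Once these are in place, the factorial $1/(pn-k)!$ produced by the remainder is precisely what converts the oversized factor $(pn)!$ back into $n!\,M_{pn}$, and the remaining summation is routine.
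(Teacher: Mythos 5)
Your proposal is correct, and its skeleton coincides with the paper's: both rest on the Fa\`a di Bruno formula for the composition $f = F \circ x^{1/p}$, a universal bound on the coefficients, and the order-$pn$ Taylor remainder of $F$, whose factor $1/(pn-k)!$ exactly cancels the singular power $\xi^{k-pn}$. But the two key steps are executed by genuinely different means. First, the singularity at $\xi=0$: the paper never faces your cancellation issue, because it replaces $f$ at the outset by $r_n = f - T_0^n f$ (which has the same $n$-th derivative) and applies the composite formula to $r_n$; since $R_n(\xi)=r_n(\xi^p)$ vanishes to order $pn$ at $0$, Taylor's theorem makes every term $R_n^{(k)}(\xi)\,\alpha_k^{(n)}(x,x)$ individually bounded. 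Your route applies the formula to $f$ itself and then kills the singular Laurent part by the boundedness argument; note that your $\operatorname{Rem}_k$ is literally the paper's $R_n^{(k)}$ (because $T_0^{pn}F = P_n$), so the two derivations arrive at the identical representation, yours with an extra (correct, but avoidable) cancellation lemma, the paper's with the subtraction trick built in. Second, the coefficient bound: the paper proves $|\alpha_k^{(n)}(x,x)| \le (2e)^n n^{n-k} |x|^{-(pn-k)/p}$ by expanding $(X^{1/p}-x^{1/p})^k$ and estimating the Taylor coefficients of $(-\log(1-z))^k$ via Cauchy's inequalities (its Lemmas 2.4--2.5), whereas you invoke monotonicity of the partial Bell polynomials and the Lah-number evaluation $B_{n,k}(1!,2!,\dots)=\binom{n-1}{k-1}n!/k!$; your bound $\binom{n-1}{k-1}n!/k! \le 2^n n^{n-k}$ is in fact slightly sharper and leans on standard combinatorics rather than an ad hoc lemma. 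The remaining discrepancies in your write-up are cosmetic: the $c_{n,k}$ are not integers (they carry factors of $1/p$, harmlessly absorbed), and your Bessel-type series acquires an extra factor $k$ in the summand, which only changes the geometric constant. Both proofs end with the same estimate $|f^{(n)}(x)| \le C (C')^n\, n!\, M_{pn}$ up to geometric factors.
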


%\vspace{1ex}

\begin{remark}\label{rem1} Using a function constructed by Bang,
we shall show
at the end of this section that, in the case where $p=2$ and the
sequence $M$ is log-convex, $\mathcal{Q}(I,M^{(2)})$ is the
smallest Denjoy--Carleman class containing all those functions
$f(x)$. As communicated to us in written form by E.~Bierstone and
F.V.~Pacheco~\cite{BP}, application of a suitable variation of
Bang's function yields the result for an arbitrary positive
integer $p$.
\end{remark}

%\vspace{1ex}

\begin{remark}\label{rem2} The case $p=2$ of Theorem \ref{substitution}
may be related to the following problem posed and investigated by
Mandelbrojt~\cite{M1} and \cite[Chap.~VI]{M2}:

\vspace{1ex}

\begin{em}
Consider a smooth function $f(x)$ on the interval $[-1,1]$ and
suppose that $F(\xi):= f(\cos \xi)$ belongs to a class
$\mathcal{Q}(\matR,M)$. To which class on the interval $[-1,1]$
does $f$ belong?
\end{em}
\end{remark}

%\vspace{1ex}

A complete solution for sequences $M'$ such that $\lim_{n
\rightarrow \infty} \, \sqrt[n]{M'_{n}} = \infty$ was given by
Lalagu\"{e} \cite[Chap.~III]{L}. He proved that those functions
$f$ must belong to the class $\mathcal{Q}(I,N'^{(2)})$,
$N'^{(2)}_{n} = 1/n^{n} \cdot N'_{2n}$, where $N':=M'^{c}$ is the
log-convex regularization of the initial sequence $M'$, and gave a
formula for the smallest Denjoy--Carleman class containing all
those functions $f$. Moreover, the smallest class coincides with
$\mathcal{Q}(I,N'^{(2)})$, if
$$ \liminf_{n \rightarrow \infty} \ \frac{1}{n} \sqrt[n]{M'_{n}} >
   0. $$
The last condition is equivalent to the existence of a constant
$C>0$ such that $M'_{n} \geq n! C^{n}$ for all $n \in \matN$, and
thus implies that the Denjoy-Carleman class corresponding to the
sequence $M$ contains analytic functions.

\vspace{1ex}

Whenever the sequence $M$ is increasing, it is not difficult to
draw Lalagu\"{e}'s result from Theorem \ref{substitution} for the
case $p=2$, because then the functions $f$ from Mandelbrojt's
problem must belong to the class $\mathcal{Q}(I,M^{(2)})$. Indeed,
the proof relies on the observation that the class
$\mathcal{Q}(I,N)$ is closed under analytic substitutions if a
sequence $N$ is increasing. This, in turn, follows immediately
from Cauchy's inequalities and the formula for the derivatives of
a composite function, recalled below:
$$ (f \circ g)^{(n)} (x) = \sum_{k=1}^{n} \, f^{(k)}(g(x))
   \cdot \frac{1}{k!} \cdot \frac{d^{n}}{dX^{n}} \, \left( g(X) - g(x)
   \right)^{k}|_{X=x}. $$

%\vspace{1ex}

\begin{proof}

Before establishing Theorem \ref{substitution}, we state two
lemmas below.

%\vspace{2ex}

\begin{lemma}\label{lem1.}
Consider the Taylor expansions
$$ \sum_{i=1}^{\infty} \, \frac{x^{i}}{i} = - \log \, (1-x) \ \
   \mbox{ and } \ \ \left( \sum_{i=1}^{\infty} \, \frac{x^{i}}{i}
   \right) ^{k} \, = \sum_{n=1}^{\infty} \, c_{k,n} x^{n}. $$
Then we have the estimate
$$ c_{k,n} \leq (2e)^{n} \cdot \frac{k!}{n^{k}} \ \ \mbox{ for all } \ \
   k,n \in \matN. $$
\end{lemma}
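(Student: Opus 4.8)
The plan is to work directly with the coefficients $c_{k,n}$. Multiplying out the $k$-th power of $\sum_{i\ge 1}x^i/i$ shows that $c_{k,n}$ is the composition sum
$$ c_{k,n} = \sum_{\substack{i_1+\cdots+i_k=n\\ i_1,\dots,i_k\ge 1}} \frac{1}{i_1\cdots i_k}, $$
so in particular $c_{k,n}\ge 0$ for all $k,n$ and $c_{k,n}=0$ whenever $k>n$; hence it suffices to treat the range $1\le k\le n$. The first real step is a Cauchy-type estimate that exploits this positivity. Since $-\log(1-x)=\sum_{i\ge 1}x^i/i$ has nonnegative Taylor coefficients, so does its $k$-th power, and therefore for every $r\in(0,1)$ the single term $c_{k,n}r^n$ is dominated by the full convergent series $\sum_{m\ge k}c_{k,m}r^m=(-\log(1-r))^k$. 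This yields
$$ c_{k,n}\le \bigl(-\log(1-r)\bigr)^{k}\, r^{-n}\qquad\text{for all }0<r<1. $$

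The second step is to choose $r$ well. The genuine saddle point (the minimizer of the right-hand side) solves a transcendental equation, so instead I would use the clean surrogate $r=n/(n+k)$, which lies close to it throughout the relevant range. With this choice $-\log(1-r)=\log(1+n/k)$ and $r^{-n}=(1+k/n)^n\le e^{k}$, giving the intermediate bound
$$ c_{k,n}\le \bigl(e\,\log(1+n/k)\bigr)^{k}. $$
It then remains to show that the right-hand side is at most $(2e)^n\,k!/n^k$. Using the elementary estimate $k!\ge (k/e)^k$ and setting $s:=n/k\ge 1$, taking $k$-th roots reduces everything to the single one-variable inequality
$$ e^{2}\, s\,\log(1+s)\le (2e)^{s}\qquad\text{for all }s\ge 1. $$

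The main obstacle is exactly this last inequality: it is tightest near $s=1$, i.e.\ when $k$ is comparable to $n$, where the exponential margin on the right is smallest. I would dispose of it by checking the endpoint $s=1$ numerically and then comparing the logarithmic growth rates of the two sides, the left-hand logarithmic derivative decaying like $1/s$ while the right-hand one stays constant at $\log(2e)=1+\log 2$; this confirms the inequality on all of $[1,\infty)$. Note that the bound is genuinely slack where it matters most: at $k=n$ the composition sum collapses to the single term $c_{n,n}=1$, far below the estimate, so no edge case is in danger. The reformulation in terms of $M'$ claimed in the theorem statement is then a purely formal rewriting via $M'_n=n!\,M_n$ and will be carried out separately.
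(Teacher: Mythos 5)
Your reduction is sound and genuinely different from the paper's argument, but the last step, as you describe it, contains a real gap. The chain $c_{k,n}\le(-\log(1-r))^k r^{-n}$, the choice $r=n/(n+k)$, and the use of $k!\ge(k/e)^k$ are all fine, and the one-variable inequality $e^2 s\log(1+s)\le(2e)^s$ for $s\ge1$ is indeed what remains and is indeed true. However, your proposed verification --- check $s=1$ and then dominate by comparing logarithmic derivatives --- fails precisely in the regime you yourself flag as tightest. The logarithmic derivative of the left side is $\frac{1}{s}+\frac{1}{(1+s)\log(1+s)}$, which at $s=1$ equals $1+\frac{1}{2\log 2}\approx 1.7213$, \emph{strictly larger} than $\log(2e)\approx 1.6931$. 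Hence the gap function $g(s):=s\log(2e)-2-\log s-\log\log(1+s)$ is \emph{decreasing} just to the right of $s=1$: its minimum over $[1,\infty)$ is attained at an interior point $s_0\approx 1.015$, not at the endpoint, so an endpoint check plus monotonicity cannot close the argument. The inequality survives only because $g(1)\approx 0.0597$ while the interior dip is of size roughly $0.0005$. The step can be repaired: $g''(s)=\frac{1}{s^2}+\frac{\log(1+s)+1}{\bigl((1+s)\log(1+s)\bigr)^2}>0$, so $g$ is convex; since $g'(1)\approx-0.0282<0<g'(1.02)\approx 0.0086$, the minimizer $s_0$ lies in $[1,1.02]$, and the tangent-line bound gives $g(s_0)\ge g(1)+(s_0-1)g'(1)\ge 0.0597-0.02\cdot 0.0282>0$. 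Alternatively, verify the inequality directly on $[1,2]$ and use your derivative comparison only for $s\ge 2$, where the left-hand logarithmic derivative is at most $\frac12+\frac{1}{3\log 3}<\log(2e)$.

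It is worth contrasting this with the paper's proof, which sidesteps the tightness entirely. There one takes the Cauchy estimate at the fixed radius $1/2$, where $|\log(1-z)|\le1$, to get the crude bound $|c_{k,n}|\le 2^n$, and then simply multiplies by the factor $e^n k!/n^k$, which exceeds $1$ because $e^n=\sum_j n^j/j!>n^k/k!$; no optimization over the radius is needed. Your saddle-point bound $\bigl(e\log(1+n/k)\bigr)^k$ is far sharper than what the lemma asserts, and it is exactly this unneeded sharpness that forces you into a delicate one-variable inequality with only a $6\%$ margin; the $(2e)^n$ in the statement is generous enough that the crude route closes immediately.
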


%\vspace{1ex}

\begin{proof}
Indeed, it is easy to verify the estimate:
$$ |\log (1-z) | \leq \left| \log 2 + \frac{\pi}{6} \sqrt{-1} \right| \leq 1
   \ \ \mbox{ for all } \ \ z \in \matC, \ |z| \leq 1/2. $$
By Cauchy's inequalities, we thus get $|c_{k,n}| \leq 2^{n}$.
Since $e^{n} > n^{k}/k!$ \ for all $n,k \in \matN$, we have
$$ c_{k,n} \leq 2^{n} < 2^{n} \cdot e^{n} \cdot \frac{k!}{n^{k}} =
    (2e)^{n} \cdot \frac{k!}{n^{k}}, $$
as asserted.
\end{proof}

\vspace{1ex}

As an immediate consequence, we obtain

\vspace{1ex}

\begin{corollary}\label{cor}
$$  \sum_{i_{1}+\ldots+i_{k}=n} \, \frac{1}{i_{1}} \cdot \ldots
   \cdot \frac{1}{i_{k}} = c_{k,n} \leq (2e)^{n} \cdot \frac{k!}{n^{k}}
   \ \ \mbox{ for all } \ \ k,n \in \matN. $$
\end{corollary}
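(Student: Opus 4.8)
The plan is to prove the corollary by directly expanding the $k$-th power of the generating function $\sum_{i\geq 1} x^i/i$ and reading off the coefficient of $x^n$; the stated upper bound then comes for free from Lemma \ref{lem1.}, which is already in hand. So the only genuine content is the combinatorial identity $\sum_{i_1+\ldots+i_k=n} 1/(i_1\cdots i_k) = c_{k,n}$.

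First I would write the power as a product of $k$ identical factors and distribute, giving
$$ \left( \sum_{i=1}^{\infty} \frac{x^{i}}{i} \right)^{k}
   = \sum_{i_{1}\geq 1} \cdots \sum_{i_{k}\geq 1}
     \frac{x^{i_{1}}}{i_{1}} \cdots \frac{x^{i_{k}}}{i_{k}}
   = \sum_{i_{1},\ldots,i_{k}\geq 1}
     \frac{x^{i_{1}+\ldots+i_{k}}}{i_{1}\cdots i_{k}}. $$
Since every coefficient here is nonnegative, the rearrangement is legitimate (the series converges absolutely for $|x|<1$, and equivalently one may argue entirely at the level of formal power series).

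Next I would collect the terms of a fixed total degree. A monomial $x^{n}$ arises precisely from those multi-indices $(i_{1},\ldots,i_{k})$ with each $i_{j}\geq 1$ and $i_{1}+\ldots+i_{k}=n$; hence the coefficient of $x^{n}$ in the product equals $\sum_{i_{1}+\ldots+i_{k}=n} 1/(i_{1}\cdots i_{k})$. Comparing this with the defining expansion $\left(\sum_{i\geq 1} x^{i}/i\right)^{k}=\sum_{n} c_{k,n}x^{n}$ yields the claimed equality. The bound $c_{k,n}\leq (2e)^{n}\cdot k!/n^{k}$ is then immediate from Lemma \ref{lem1.}.

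There is no real obstacle here, which is why the statement is billed as an immediate consequence: the argument is just the bookkeeping identity for the coefficients of a power of a generating function. The only points worth a remark are the legitimacy of the rearrangement, handled by absolute convergence, and the observation that the index set is empty (and both sides vanish) unless $n\geq k$, which causes no difficulty.
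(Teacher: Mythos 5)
Your proof is correct and matches the paper's approach: the paper presents this corollary as an immediate consequence of Lemma \ref{lem1.}, with the coefficient identity $\sum_{i_{1}+\ldots+i_{k}=n} 1/(i_{1}\cdots i_{k}) = c_{k,n}$ being exactly the bookkeeping expansion of the $k$-th power that you carry out explicitly. Your added remarks on absolute convergence and the empty index set for $n<k$ are harmless elaborations of the same argument.
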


\vspace{1ex}

\begin{lemma}\label{lem2}
Let $p,k \in \matN$ with $p > 1$, $k \geq 1$, and
$$ \alpha_{k}(X,x) := \frac{1}{k!} \left( X^{1/p} - x^{1/p}
   \right)^{k}
   \ \ \mbox{ for } \ \ X,x > 0, $$
where $\alpha_{k}$ is regarded as a function in one variable $X$
and parameter $x$. Then we have the estimate
$$ \left| \alpha_{k}^{(n)}(x,x) \right| \leq (2e)^{n} \cdot n^{n-k}
   \cdot x^{- \frac{pn-k}{p}} \ \ \mbox{ for all } \ \
   n,k \in \matN, \ \ x > 0. $$
\end{lemma}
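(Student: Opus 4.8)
The plan is to pass to the Taylor expansion of $\alpha_k(\,\cdot\,,x)$ about the point $X=x$ and to read off the $n$-th derivative from its coefficients, which is legitimate because $X^{1/p}$, and hence $\alpha_k$, is real-analytic in $X$ for $X>0$. Writing $X=x(1+t)$ with $t=(X-x)/x$ and using the binomial series, I would expand
$$ X^{1/p} - x^{1/p} = x^{1/p}\bigl((1+t)^{1/p} - 1\bigr) = x^{1/p}\sum_{i=1}^{\infty}\binom{1/p}{i}t^{i}, $$
valid for $|t|<1$, i.e.\ for $X$ near $x$. Raising to the $k$-th power gives
$$ \alpha_k(X,x) = \frac{x^{k/p}}{k!}\Bigl(\sum_{i=1}^{\infty}\binom{1/p}{i}t^{i}\Bigr)^{k} = \frac{x^{k/p}}{k!}\sum_{n=k}^{\infty} b_n\, t^{n}, \quad b_n := \sum_{i_1+\cdots+i_k=n}\ \prod_{j=1}^{k}\binom{1/p}{i_j}. $$
Since $t^{n}=(X-x)^{n}x^{-n}$ and the $n$-th derivative at $X=x$ is $n!$ times the coefficient of $(X-x)^{n}$, this yields the exact identity $\alpha_k^{(n)}(x,x)=\tfrac{n!}{k!}\,b_n\,x^{k/p-n}$. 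The power of $x$ here is $x^{k/p-n}=x^{-(pn-k)/p}$, which already matches the asserted estimate; so the whole problem reduces to bounding $(n!/k!)\,|b_n|$ by $(2e)^{n}n^{n-k}$.

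Next I would estimate the generalized binomial coefficients. For $i\geq 1$ one has $|\binom{1/p}{i}|=\tfrac{1}{i!}\prod_{j=0}^{i-1}|1/p-j|=\tfrac{1}{p\,i!}\prod_{j=1}^{i-1}(j-1/p)$, and since $j-1/p<j$ the product is at most $(i-1)!$, whence $|\binom{1/p}{i}|\leq 1/i$. Substituting this bound into the definition of $b_n$ produces precisely the sum appearing in Corollary~\ref{cor}:
$$ |b_n| \leq \sum_{i_1+\cdots+i_k=n}\ \prod_{j=1}^{k}\frac{1}{i_j} = c_{k,n}. $$

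Finally, I would invoke Corollary~\ref{cor} to get $|b_n|\leq c_{k,n}\leq (2e)^{n}k!/n^{k}$, so that
$$ \bigl|\alpha_k^{(n)}(x,x)\bigr| = \frac{n!}{k!}\,|b_n|\, x^{k/p-n} \leq n!\,(2e)^{n}\, n^{-k}\, x^{-(pn-k)/p}, $$
and close the argument with the elementary inequality $n!\leq n^{n}$, which upgrades $n!\,n^{-k}$ to $n^{n-k}$. For $n<k$ both sides vanish or the estimate is trivial, since $\alpha_k$ vanishes to order $k$ at $X=x$. The only genuinely delicate point is the first step---justifying that the $n$-th $X$-derivative on the diagonal is read off from the $t$-expansion and tracking the powers of $x$ correctly; once the reduction to $c_{k,n}$ is in place, the binomial bound together with Corollary~\ref{cor} makes everything fall out, so I expect the remaining work to be pure bookkeeping rather than any real analytic difficulty.
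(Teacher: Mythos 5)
Your proof is correct and follows essentially the same route as the paper's: expand $X^{1/p}-x^{1/p}$ about $X=x$ (your coefficients $\binom{1/p}{i}$ are exactly the paper's $a_i$, bounded by $1/i$ in the same way), raise to the $k$-th power, bound the resulting coefficients by $c_{k,n}$ via Corollary~\ref{cor}, and finish with $n!\leq n^{n}$. The only differences are cosmetic bookkeeping (the substitution $X=x(1+t)$ and where the factor $1/k!$ sits), so nothing further is needed.
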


%\vspace{1ex}
\begin{proof}
Consider first the case $k=1$, \ $\alpha_{1}(X,x)= X^{1/p} -
x^{1/p}$. Then
$$ \alpha_{1}^{(n)}(x,x) = \pm \frac{(p-1)(2p-1) \cdot \ldots
   \cdot ((n-1)p -1)}{p^{n}} \cdot x^{- \frac{pn-1}{p}}, $$
whence
$$ \left| \alpha_{1}^{(n)}(x,x) \right| \leq (n-1)! \cdot x^{- \frac{pn-1}{p}}
   \leq n^{n-1} \cdot x^{- \frac{pn-1}{p}}, $$
as asserted. The Taylor expansion of $\alpha_{1}(X,x)$ at $X=x$ is
$$ \alpha_{1}(X,x) = \sum_{i=1}^{\infty} \, a_{i} \cdot x^{- \frac{pi-1}{p}}
   (X-x)^{i}, $$
where
$$ a_{i} := \pm \frac{1}{i!} \cdot \frac{(p-1)(2p-1) \cdot \ldots
   \cdot ((i-1)p -1)}{p^{i}}; $$
obviously, $|a_{i}| \leq (i-1)!/i! =1/i$. We thus get
$$ \alpha_{k}(X,x) = \frac{1}{k!} \left( \sum_{i=1}^{\infty} \, a_{i} \cdot x^{- \frac{pi-1}{p}}
   (X-x)^{i} \right)^{k} = \sum_{j=1}^{\infty} \, b_{j} \cdot x^{- \frac{pj-k}{p}}
   (X-x)^{j},  $$
where
$$ b_{j} := \frac{1}{k!} \sum_{i_{1}+\ldots+i_{k}=j} \, a_{i_{1}}
   \cdot \ldots \cdot a_{i_{k}}. $$
Then
$$ |b_{n} | \leq \frac{1}{k!} \sum_{i_{1}+\ldots+i_{k}=n} \,
   |a_{i_{1}}| \cdot \ldots \cdot |a_{i_{k}}| \leq \frac{1}{k!}
   \sum_{i_{1}+\ldots+i_{k}=n} \, \frac{1}{i_{1}} \cdot \ldots
   \cdot \frac{1}{i_{k}} = $$
$$  = \frac{c_{k,n}}{k!} \leq \frac{(2e)^{n}}{n^{k}}
   \ \ \mbox{ for all } \ \ n,k \in \matN; $$
the last inequality follows from Corollary \ref{cor}. Hence
$$ \left| \alpha_{k}^{(n)}(x,x) \right| \leq n! \cdot | b_{n}|
   \cdot x^{- \frac{pn-k}{p}} \leq (2e)^{n} \cdot \frac{n!}{n^{k}}
   \cdot x^{- \frac{pn-k}{p}} \leq (2e)^{n} \cdot n^{n-k} \cdot x^{- \frac{pn-k}{p}} $$
for all $n,k \in \matN$, $x > 0$, as asserted.
\end{proof}

\vspace{2ex}

Now we can readily pass to the proof of Theorem
\ref{substitution}. We shall work with estimates corresponding to
the sequence $M'$. So suppose that
$$ \left| F^{(n)}(\xi) \right| \leq A^{n} M'_{n} $$
for all $n \in \matN$, $\xi \in I$ and some constant $A>0$. We are
going to estimate the growth of the $n$-th derivative $f^{(n)}$.
Fix $n \in \matN$ and put:
$$ p_{n}(x) := T_{0}^{n}f(x) = \sum_{k=0}^{n-1} \, f^{k}(0)
   \frac{x^{k}}{k!}, \ \ \ r_{n}(x) := f(x) - p_{n}(x), $$
$$ P_{n}(\xi) := p_{n}(\xi^{p}) \ \ \mbox{ and } \ \ R_{n}(\xi) :=
   r_{n}(\xi^{p}). $$
Obviously,
$$ P_{n}^{(pn)} \equiv 0 \ \ \mbox{ and } \ \ R_{n}^{(pn)} \equiv
   F^{(pn)}. $$
From the Taylor formula, we therefore obtain the estimate
$$ \left| R_{n}^{(pn)}(\xi) \right| \leq A^{pn} M'_{pn} \cdot
   \frac{\xi^{pn-k}}{(pn-k)!} $$
for all $k < pn$, $\xi \in I$. We still need an elementary
inequality
\begin{equation}\label{element}
   \frac{1}{(pn-k)!} \leq \frac{e^{pn}}{n^{pn-k}} \ \ \mbox{ for
   all } \ \ k<pn,
\end{equation}
which can be proven via the following well-known Stirling formula
$$ \sqrt{2\pi n} \cdot \frac{n^{n}}{e^{n}} < n! < e \sqrt{n}
   \cdot \frac{n^{n}}{e^{n}}. $$
Indeed, it suffices to show that
$$ \frac{e^{pn-k}}{(pn-k)^{pn-k}} \leq \frac{e^{pn}}{n^{pn-k}}. $$
When $pn-k \geq n$ or, equivalently, $k \leq (p-1)n$, the last
inequality is evident. Suppose thus that $pn-k<n$ or,
equivalently, $k>(p-1)n$. This inequality is, of course,
equivalent to
$$ \left( \frac{n}{pn-k} \right) ^{pn-k} \leq e^{k}, $$
which holds as shown below:
$$ \left( \frac{n}{pn-k} \right) ^{pn-k} =
   \left( 1 + \frac{k-(p-1)n}{pn-k} \right)^{pn-k} = $$
$$  = \left[ \left( 1 + \frac{k-(p-1)n}{pn-k}
   \right)^{\frac{pn-k}{k-(p-1)n}} \right] ^{k-(p-1)n}< e^{k}. $$

\vspace{1ex}

Now, the foregoing estimate along with inequality \eqref{element}
yield
$$ \left| R_{n}^{(pn)}(\xi) \right| \leq A^{pn} M'_{pn} \cdot \frac{e^{pn}}{n^{pn-k}}
    \cdot \xi^{pn-k}. $$
Applying the formula for the derivatives of a composite function,
we obtain
$$ r_{n}^{(n)}(x) = \sum_{k=1}^{n} \, R_{n}^{(k)}(\xi) \cdot
   \alpha_{k}^{(n)}(x,x). $$
Hence and by Lemma \ref{lem2}, we get
$$ \left| f^{(n)}(x) \right| = \left| r_{n}^{(n)}(x) \right| \leq
   \sum_{k=1}^{n} \, A^{pn} M'_{pn} \cdot \frac{e^{pn}}{n^{pn-k}}
   \cdot |\xi|^{pn-k} \cdot (2e)^{n} n^{n-k} \cdot |\xi|^{-(pn-k)} = $$
$$ = n \cdot (2e)^{n} \cdot (eA)^{pn} \cdot \frac{M'_{pn}}{n^{(p-1)n}}, $$
which completes the proof of Theorem \ref{substitution}.
\end{proof}

\vspace{2ex}

Finally, we show that, when the sequence $M'$ is log-convex,
$\mathcal{Q}(I,M^{(2)})$ is the smallest Denjoy--Carleman class
containing all smooth functions $f(x)$ on the interval $I =[0,1]$
such that $F(\xi) = f(\xi^{2}) \in \mathcal{Q}(I,M)$. We make use
of a classical function constructed by Bang~\cite{Ba}, applied in
his proof that the classes determined by log-convex sequences
contain functions with sufficiently large derivatives (the result
due to Cartan~\cite{C} and Mandelbrojt~\cite{C-M}; see also
\cite{Thil}, Section~1, Theorem~1).

\vspace{1ex}

The logarithmic convexity of the sequence $M'$ yields for every
$j,k \in \matN$ the inequality
$$ \left( \frac{1}{m_{k}} \right)^{k-j} \leq \frac{M'_{j}}{M'_{k}} \ \
   \mbox{ where } \ \ m_{k} := \frac{M'_{k+1}}{M'_{k}}. $$
Consequently,
$$ F(\xi) := \sum_{k=0}^{\infty} \, \frac{M'_{k}}{(2m_{k})^{k}} \cos
   (2m_{k}\xi) $$
is an even smooth function on $\matR$ such that
$$ F(\xi) \in \mathcal{Q}(\matR,M) \ \ \mbox{ and } \ \
   \left| F^{(2n)}(0) \right| \geq M'_{2n} $$
for all $n \in \matN$. Therefore $F(\xi) = f(\xi^{2})$ for some
smooth function $f$ on $\mathbb{R}$ (cf.~\cite{Whi}), and we get
$$ f^{(n)}(0) = \frac{n!}{(2n)!} F^{(2n)}(0) \ \ \mbox{ and } \ \
   \left| f^{(n)}(0) \right| \geq \frac{n! M'_{2n}}{(2n)!}, $$
which is the desired result.

\vspace{1ex}

It was communicated to us by E.~Bierstone and
F.V.~Pacheco~\cite{BP} that, in order to obtain this result for an
arbitrary positive integer $p$, one can replace $\cos x$ by the
function
$$ C_{p}(x) := \sum_{j=0}^{\infty} \frac{x^{jp}}{(jp)!} $$
with the properties $C_{p}^{(pn)}(x)=C_{p}(x)$,
$C_{p}^{(pn)}(0)=1$ and $\left| C_{p}^{(n)}(x) \right| \leq e$ for
all $n \in \matN$, $x \in [-1,1]$. Then
$$ F(\xi) := \sum_{k=0}^{\infty} \, \frac{M'_{k}}{(2m_{k})^{k}}
   C_{p}(2m_{k}\xi) $$
is a smooth function on $\matR$ such that
$$ F(\xi) \in \mathcal{Q}([-1,1],M) \ \ \mbox{ and } \ \
   \left| F^{(pn)}(0) \right| \geq M'_{pn} $$
for all $n \in \matN$. As before, there exists a smooth function
$f$ on $\mathbb{R}$ such that
$$ F(\xi) = f(\xi^{p}), \ \ f^{(n)}(0) = \frac{n!}{(pn)!} F^{(pn)}(0) \ \ \mbox{ and } \ \
   \left| f^{(n)}(0) \right| \geq \frac{n! M'_{pn}}{(pn)!}. $$

\vspace{2ex}

We conclude this section with some examples, one of which (namely,
for $k=2$) will be applied to the construction of our
counterexample in the last section.

\vspace{2ex}

\begin{example}\label{example} Fix an integer $k \in \matN$, $k \geq 1$, and put
$$ \log^{(k)} := \underbrace{\log \circ \ldots \circ \log}_{k
   \mbox{ \scriptsize times}}, \ \ \mbox{ and } \ \
   e\uparrow \uparrow k := (\underbrace{\exp \circ \ldots \circ \exp}_{k
   \mbox{ \scriptsize times}}) (1); $$
let $n_{k}$ be the smallest integer greater than $e \uparrow
\uparrow k$. Then the sequence
$$ \left( \log^{(k)} n \right)^{n} \ \ \mbox{ for } \ \ n \geq n_{k} $$
is log-convex. Further, the shifted sequence:
$$ M=(M_{n}), \ \ M_{n} := \frac{1}{\left( \log^{(k)} n_{k} \right)^{n_{k}} } \cdot
   \left( \log^{(k)} (n_{k} +n) \right)^{(n_{k}+n)}, $$
determines a quasianalytic class closed under derivatives; the
former follows from Cauchy's condensation criterion. It is easy to
check that the sequences $M^{(p)}$, $p>1$, are quasianalytic when
$k>1$, but are not quasianalytic when $k=1$.
\end{example}

\section{Non-extendability of quasianalytic germs}

In this section we are concerned with a result by
V.~Thilliez~\cite{Thil-2} on the extension of quasianalytic
function germs in one variable, recalled below. As before,
consider two log-convex sequences $M$ and $N$ with $M_{0}=N_{0}=1$
such that $\mathcal{Q}_{1}(M) \subset \mathcal{Q}_{1}(N)$. Denote
by $\mathcal{Q}_{1}(M)^{+}$ the local ring of right-hand side
germs at zero (i.e.\ germs of functions from
$\mathcal{Q}([0,\varepsilon],M)$ for some $\varepsilon >0$).

\vspace{2ex}

\begin{theorem}\label{non1}
If \ $\mathcal{Q}_{1}(N)$ is a quasianalytic local ring, then
$$ \mathcal{O}_{1} \varsubsetneq \mathcal{Q}_{1}(M) \subset
   \mathcal{Q}_{1}(N) \ \ \Longrightarrow \ \
   \mathcal{Q}_{1}(M)^{+} \setminus \mathcal{Q}_{1}(N) \neq \emptyset, $$
i.e.\ there exist right-hand side germs from
$\mathcal{Q}_{1}(M)^{+}$ which do not extend to germs from
$\mathcal{Q}_{1}(N)$. Here $\mathcal{O}_{1}$ stands for the local
ring of analytic function germs in one variable at $0 \in
\mathbb{R}$.
\end{theorem}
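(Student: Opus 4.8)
The plan is to argue by contradiction: assume that \emph{every} right-hand side germ from $\mathcal{Q}_1(M)^{+}$ extends to a two-sided germ from $\mathcal{Q}_1(N)$, and deduce a uniform bounded extension that is incompatible with the strictness $\mathcal{O}_1 \varsubsetneq \mathcal{Q}_1(M)$. First I would fix the functional-analytic framework. For a fixed radius $\varepsilon$ and a fixed $R$, the space $\mathcal{Q}^{R}([0,\varepsilon],M)$ of functions $g$ with $\sup_{x,n} |g^{(n)}(x)|/(R^{n} n!\, M_{n}) < \infty$ is a Banach space, and $\mathcal{Q}_1(M)^{+} = \varinjlim \mathcal{Q}^{R}([0,\varepsilon],M)$ is an (LF)-space; likewise $\mathcal{Q}_1(N)$ is the (LF)-space built from the two-sided intervals $[-\delta,\delta]$. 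Since $N$ is quasianalytic, a two-sided $N$-germ is determined by its Taylor series, hence by its restriction to the right, so the restriction map $T : \mathcal{Q}_1(N) \to \mathcal{Q}_1(N)^{+}$ is a continuous linear injection whose image is exactly the set of extendable right-hand $N$-germs. Because $M \subseteq N$, there is also a continuous inclusion $\iota : \mathcal{Q}_1(M)^{+} \hookrightarrow \mathcal{Q}_1(N)^{+}$, and the contradiction hypothesis reads $\iota(\mathcal{Q}_1(M)^{+}) \subseteq T(\mathcal{Q}_1(N))$.

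The core of the argument is to upgrade this set-theoretic inclusion to a continuous extension operator and then to localize it. As $T$ is injective, the hypothesis lets me define the linear map $S := T^{-1}\circ \iota : \mathcal{Q}_1(M)^{+} \to \mathcal{Q}_1(N)$, which sends a one-sided $M$-germ to its unique two-sided $N$-extension. I would verify that $S$ has closed graph: if $g_{j} \to g$ and $S g_{j} \to h$, then applying the continuous map $T$ and comparing inside $\mathcal{Q}_1(N)^{+}$ gives $T h = \iota g = T(Sg)$, whence $h = Sg$ by injectivity of $T$. The closed graph theorem for (LF)-spaces (De Wilde) then makes $S$ continuous, and, since each Banach step $\mathcal{Q}^{R}([0,\varepsilon],M)$ is a Baire space, Grothendieck's factorization theorem forces $S$ to respect the inductive structure: there exist $\varepsilon,\delta,R,R'$ and $C>0$ such that every $g \in \mathcal{Q}^{R}([0,\varepsilon],M)$ extends to some $\tilde g \in \mathcal{Q}^{R'}([-\delta,\delta],N)$ with $\|\tilde g\|_{R',N} \le C\,\|g\|_{R,M}$. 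The open mapping theorem for (LF)-spaces enters here to guarantee that the relevant continuous bijections --- in particular $T$ onto its image and the identifications via the injective Borel map --- are topological isomorphisms, so that the Banach steps produced by the factorization are the genuine ones.

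The final step is to contradict this uniform two-sided extension estimate. The hypothesis $\mathcal{O}_1 \varsubsetneq \mathcal{Q}_1(M)$ is equivalent to $\sup_{n} \sqrt[n]{M_{n}} = \infty$, so $M$ lies strictly beyond the analytic scale. Using this, I would construct a family of right-hand $M$-germs $g_{t}$, $t \in (0,\varepsilon]$, that stay bounded in the single Banach space $\mathcal{Q}^{R}([0,\varepsilon],M)$ while the two-sided regularity of their extensions across the origin degenerates as $t \to 0$, so that these extensions cannot remain in any fixed step $\mathcal{Q}^{R'}([-\delta,\delta],N)$; the building blocks are the oscillatory functions of Bang and Cartan--Mandelbrojt realizing maximal derivative growth for log-convex sequences (recalled at the end of Section~2), combined with reflection $x \mapsto -x$ through the origin. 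This last step is the main obstacle: everything up to the uniform estimate is soft functional analysis, whereas ruling out a bounded extension on a fixed pair of Banach steps requires a quantitative, hands-on construction that converts the mere strictness of $\mathcal{O}_1 \varsubsetneq \mathcal{Q}_1(M)$ into germs that defeat uniform extension.
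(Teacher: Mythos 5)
Your functional-analytic reduction is sound, and it is essentially the same ``soft'' step as in the paper (the paper proves the refinement, Theorem \ref{non2}, by exactly this scheme, following Thilliez): endow the germ spaces with their inductive topologies built from Banach steps with compact embeddings, use quasianalyticity of $N$ to make the restriction map injective, and invoke Grothendieck's open mapping and factorization theorems. Your closed-graph formulation via $S = T^{-1}\circ\iota$ is an equivalent way of reaching the same conclusion: fixed Banach steps and a constant $C>0$ such that every $g$ in a fixed one-sided step of class $M$ admits a two-sided $N$-extension $\tilde g$ with $\|\tilde g\| \leq C \|g\|$.

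However, the last step --- refuting this uniform estimate --- is not carried out, and the route you sketch would not work. The Bang and Cartan--Mandelbrojt functions are \emph{two-sided} functions of class $M$; by quasianalyticity of $N$, the unique extension of the right-hand restriction of such a function is the function itself, which lies in a fixed two-sided step with norm comparable to the one-sided norm, so no degeneration can occur along such a family. More fundamentally, what Bang's construction provides is lower bounds on derivatives \emph{at the origin}; but any $g$ bounded in the one-sided step already satisfies $|g^{(n)}(0)| \leq \|g\|\, R^{n} n!\, M_{n}$, and every extension has the same derivatives at $0$, so no lower bounds of this kind can force extension norms to blow up. The contradiction must instead come from controlling the extension at points strictly to the left of $0$, where the only link to the one-sided data is quasianalytic uniqueness. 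The paper (following Thilliez) achieves this with polynomials: a polynomial $P$ is its own unique $N$-extension, so the uniform estimate yields $|P(-1/l)| \leq A\, \|P\|_{M,[0,1],1}$; Cauchy's inequalities together with $\sup_{n} \sqrt[n]{M_{n}} = \infty$ (this is precisely where the hypothesis $\mathcal{O}_{1} \varsubsetneq \mathcal{Q}_{1}(M)$ enters) give $\|P\|_{M,[0,1],1} \leq B \sup_{W} |P|$ for the compact set $W = \{ z \in \mathbb{C} : \mathrm{dist}(z,[0,1]) \leq 1/(2l) \}$; and Runge approximation produces polynomials converging uniformly to $0$ on $W$ while converging to $1$ at the point $-1/l \notin W$, a contradiction. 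Without this polynomial--Runge argument, or some substitute that likewise exploits uniqueness of the extension away from the origin, your proof is incomplete at its decisive point.
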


%\vspace{2ex}

\begin{remark}\label{rem3} Theorem \ref{non1} may be related to
the research by M.~Langenbruch~\cite{Lb} on the extension of
ultradifferentiable functions in several variables, principally
focused on the non-quasianalytic case, which seems to be more
difficult in this context. His extension problem is, roughly
speaking, as follows:

\vspace{1ex}

\begin{em}
Given two compact convex subsets $K,K_{1}$ of $\matR^{m}$ such
that $\mbox{\rm int}\, (K) \neq \emptyset$ or $K = \{ 0 \}$ and $K
\subset \mbox{\rm int}\, (K_{1})$, characterize the sequences of
positive numbers $M$ and $N$ such that every function from the
class $\mathcal{Q}(K,M)$ extends to a function from
$\mathcal{Q}(K_{1},N)$.
\end{em}
\end{remark}

%\vspace{1ex}

M.~Langenbruch applies, however, different methods and techniques
in comparison with V.~Thilliez. In particular, his approach is
based on the theory of Fourier transform and plurisubharmonic
functions.

\vspace{1ex}

On the other hand, Thilliez's approach relies on Grothendieck's
version of the open mapping theorem (cf.~\cite{Gro}, Chap.~4,
Part~1, Theorem~2 or \cite{M-V}, Part~IV, Chap.~24) and Runge
approximation. It also enables a formulation of the
non-extendability theorem for quasianalytic function germs on a
compact convex subset $K \subset \matR^{m}$ with $0 \in K$.

\vspace{1ex}

Nevertheless, in order to construct our counterexample in the next
section, we need a refinement of Theorem \ref{non1}, stated below.
Thilliez's proof can be adapted mutatis mutandis. We shall outline
it for the reader's convenience. Consider an increasing countable
family $M^{[p]}$, $p\in \matN$, of log-convex sequences, i.e.\
$$ 1= M^{[p]}_{0} \leq M^{[p]}_{1} \leq M^{[p]}_{2} \leq M^{[p]}_{3} \leq
   \ldots\ \ \ \mbox{ and } \ \ M^{[p]}_{j} \leq M^{[q]}_{j} $$
for all $j,p,q \in \matN$, $p \leq q$. Then we receive an
ascending sequence of local rings
$$ \mathcal{Q}_{1}(M^{[1]}) \subset \mathcal{Q}_{1}(M^{[2]}) \subset
   \mathcal{Q}_{1}(M^{[3]}) \subset \ldots $$
such that $\mathcal{Q}_{1}(M^{[p]})$ is dominated by
$\mathcal{Q}_{1}(M^{(q)})$ for all $p,q \in \matN$ with $p \leq
q$.

\vspace{2ex}

\begin{theorem}\label{non2}
If every local ring $\mathcal{Q}_{1}(M^{[p]})$ is quasianalytic,
then
$$ \mathcal{O}_{1} \varsubsetneq \mathcal{Q}_{1}(M) \subset \bigcup_{p \in \matN} \,
   \mathcal{Q}_{1}(M^{[p]}) \ \ \Longrightarrow \ \
   \mathcal{Q}_{1}(M)^{+} \setminus \bigcup_{p \in \matN} \,
   \mathcal{Q}_{1}(M^{[p]}) \neq \emptyset. $$
\end{theorem}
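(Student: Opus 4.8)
The plan is to run Thilliez's proof of Theorem~\ref{non1} with the single target ring $\mathcal{Q}_1(N)$ replaced by the inductive limit $G:=\bigcup_{p\in\matN}\mathcal{Q}_1(M^{[p]})$, the only genuinely new ingredient being one application of Grothendieck's factorization theorem to pin the union down to a single sequence $M^{[p]}$. For the functional-analytic setting, for a log-convex sequence $L$, a radius $R>0$ and a compact interval $K\ni 0$ let $B(K,L,R)$ be the Banach space of $f\in C^{\infty}(K)$ with $\|f\|_{K,L,R}:=\sup_{x\in K,\,n}|f^{(n)}(x)|/(R^{n}L_{n})<\infty$. Then $\mathcal{Q}_1(M)^{+}=\varinjlim B([0,\varepsilon],M',R)$ and each $\mathcal{Q}_1(M^{[p]})=\varinjlim B([-\delta,\delta],(M^{[p]})',S)$ are (LB)-spaces, so the countable union $G$ is again a countable inductive limit of Banach spaces; all of these spaces are ultrabornological and webbed.

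Arguing by contradiction, I would assume that every right-hand germ of $\mathcal{Q}_1(M)^{+}$ extends to a two-sided germ in $G$. Quasianalyticity of each $\mathcal{Q}_1(M^{[p]})$ makes such an extension unique: the difference of two extensions lies in the larger of the two classes, vanishes on a right neighbourhood of $0$, hence has vanishing Taylor series at $0$ and so vanishes. Thus the extension operator $\mathcal{E}:\mathcal{Q}_1(M)^{+}\to G$ is a well-defined linear map. To prove it continuous I would use the closed graph theorem: the Taylor-coefficient map $j_{\infty}:g\mapsto(g^{(n)}(0))_{n}$ is continuous into $\matR^{\matN}$ on every Banach step (hence on $\mathcal{Q}_1(M)^{+}$ and on $G$) and injective on each quasianalytic class, and since $j_{\infty}\circ\mathcal{E}=j_{\infty}$ the graph of $\mathcal{E}$ is closed; De Wilde's closed graph theorem (ultrabornological source, webbed target) then gives continuity. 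Equivalently, one may obtain $\mathcal{E}$ from Grothendieck's open mapping theorem for (LF)-spaces.

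The decisive step is the collapse of the union. As $\mathcal{Q}_1(M)^{+}$ is a countable inductive limit of the Banach spaces $F=B([0,\varepsilon],M',R)$, Grothendieck's factorization theorem applies to each continuous restriction $\mathcal{E}|_{F}:F\to G$: since $F$ is Banach, $\mathcal{E}(F)$ lands in a single step $\mathcal{Q}_1(M^{[p]})$, and a second factorization (equivalently the open mapping theorem between the relevant (LB)-spaces) places it in a single Banach step, yielding a uniform estimate $\|\mathcal{E}f\|_{[-\delta,\delta],(M^{[p]})',S}\le C\|f\|_{[0,\varepsilon],M',R}$ for all $f\in F$. Testing this on monomials (where $\mathcal{E}f$ and $f$ share all derivatives at $0$) also gives $M'_{n}\le C(S/R)^{n}(M^{[p]})'_{n}$, so that $\mathcal{O}_1\varsubsetneq\mathcal{Q}_1(M)\subset\mathcal{Q}_1(N)$ for the single sequence $N:=M^{[p]}$. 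We are now exactly in the situation of Theorem~\ref{non1}, and I would finish by Thilliez's construction verbatim: using that $\mathcal{O}_1\varsubsetneq\mathcal{Q}_1(M)$ (so that $(M_{n})^{1/n}\to\infty$) together with Runge approximation and Bang's function from Section~2, one exhibits a right-hand germ of class $M$ lying in the fixed step $F$ whose unique $N$-extension violates the uniform estimate above, a contradiction.

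I expect the principal obstacle to be precisely this factorization/collapse step: one must guarantee that the operator $\mathcal{E}$, which a priori only lands in the inductive limit $G$, in fact carries each Banach step of the source, with uniform control, into a single Banach step $B([-\delta,\delta],(M^{[p]})',S)$. This depends on the regularity and on the webbed/ultrabornological character of these concrete germ spaces, which must be checked so that Grothendieck's factorization and open mapping theorems genuinely apply; the subsequent point — arranging that the Runge--Bang germ which defeats the uniform estimate actually lies in the prescribed step $F$ — is the other delicate matter, and is where the adaptation of Thilliez's argument has to be carried out with care.
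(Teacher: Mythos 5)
Your functional-analytic skeleton coincides with the paper's: assume every right-hand germ extends, use Grothendieck's open mapping theorem to make the (inverse of the) restriction operator continuous (the paper phrases this with the restriction map $R$ defined on the intersection ring $\bigcup_{p}\mathcal{Q}_{1}(M^{[p]})\cap\mathcal{Q}_{1}(M)^{+}$, topologized as the inductive limit of the Banach algebras $B_{k}(M^{[k]})\cap B_{k}(M)^{+}$), and then apply Grothendieck's factorization theorem so that a fixed Banach step of the source, say $B_{1}(M)^{+}$, is carried into a single Banach step of the target with a uniform estimate $\| R^{-1}f\|_{M^{[l]},[-1/l,1/l],l}\le A\,\| f\|_{M,[0,1],1}$. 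Up to this point your proposal is sound and is exactly the paper's route. The genuine gap is what comes after. First, your claim that testing the estimate on monomials yields $M'_{n}\le C(S/R)^{n}(M^{[p]})'_{n}$ is unjustified: the source norm of $x^{n}$ is a supremum over all derivatives, $\sup_{k\le n}\, n!\,\varepsilon^{n-k}/\bigl((n-k)!\,R^{k}M'_{k}\bigr)$, which for rapidly growing $M'$ is dominated by the small-$k$ terms (of merely factorial--geometric size) and not by the term $n!/(R^{n}M'_{n})$ you are implicitly isolating; polynomials carry no information about $M'$, so no such inequality follows. Moreover, even granting the inclusion $\mathcal{Q}_{1}(M)\subset\mathcal{Q}_{1}(N)$ with $N=M^{[p]}$, invoking Theorem~\ref{non1} as a black box cannot finish the proof, since the germ it produces need not lie in the fixed step $B_{1}(M)^{+}$ to which your uniform estimate applies --- a difficulty you yourself flag but do not resolve. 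Second, and decisively, the contradiction itself is absent: you defer to ``Thilliez's construction verbatim,'' but your description of that construction (Bang's function plus a specially built germ in the fixed step) is not what it is; Bang's function plays no role here --- in the paper it appears only in Section~2, for the sharpness of the power substitution theorem.

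What actually closes the argument, in both Thilliez's proof and the paper's adaptation, is elementary and dissolves your ``delicate matter'': the objects defeating the estimate are polynomials, which lie in every Banach step (in particular in $B_{1}(M)^{+}$) and are their own unique extensions. The uniform estimate gives $|P(-1/l)|\le A\,\| P\|_{M,[0,1],1}$ for every $P\in\matC[x]$. Since $\mathcal{O}_{1}\varsubsetneq\mathcal{Q}_{1}(M)$ and $M$ is log-convex, $\sqrt[n]{M_{n}}\nearrow\infty$, hence $B:=\sup_{n}\,(2l)^{n}/M_{n}<\infty$; Cauchy's inequalities on $W:=\{z\in\matC:\ \mathrm{dist}(z,[0,1])\le 1/(2l)\}$ then give $\| P\|_{M,[0,1],1}\le B\,\sup_{W}|P|$, so $|P(-1/l)|\le AB\,\sup_{W}|P|$ for all polynomials. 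Runge approximation on the compact set $W\cup\{-1/l\}$, whose complement is connected, produces polynomials converging uniformly to $0$ on $W$ and to $1$ at $-1/l$, contradicting this estimate. Note where the hypothesis $\mathcal{O}_{1}\varsubsetneq\mathcal{Q}_{1}(M)$ really enters: through the finiteness of $B$, not through any Bang-type germ. As written, then, your proposal reproduces the paper's architecture but stops short of, and mislocates, the actual contradiction, so it is not a complete proof.
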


%\vspace{1ex}

\begin{proof}
We adopt the following notation. For a smooth function $f$ on an
interval $I \subset \matR$ and $r>0$, put
$$ \| f \|_{M,I,r} := \sup \, \left\{ \frac{\left| f^{(n)}(x) \right|}{r^{n} n!
   M_{n}}: \ n \in \matN, \ x \in I \right\}. $$
For $k \in \matN$, $k>0$, let \ $ B_{k}(M)$ \ or \ $B_{k}(M)^{+}$,
respectively, denote the Banach space with norm
$$ \| \cdot \|_{M,[-1/k,1/k],k}  \ \ \mbox{ or } \ \ \| \cdot \|_{M,[0,1/k],k}, $$
of those smooth functions on the interval $[-1/k,1/k]$ or
$[0,1/k]$) such that
$$ \| f \|_{M,[-1/k,1/k],k} < \infty \ \ \mbox{ or } \ \
  \| f \|_{M,[0,1/k],k} < \infty, \ \ \mbox{respectively.} $$

As the canonical embeddings
$$ B_{k}(M) \hookrightarrow B_{l}(M) \ \ \mbox{ and } \ \
   B_{k}(M)^{+} \hookrightarrow B_{l}(M)^{+}, \ \ k,l \in \matN, \ k \leq l, $$
are compact linear operators (a consequence of Ascoli's theorem;
cf.~\cite{Kom0}), one can endow the local rings
$\mathcal{Q}_{1}(M)$ and $\mathcal{Q}_{1}(M)^{+}$ with the
inductive topologies. Similarly, the countable union of local
rings
$$ \bigcup_{p \in \matN} \, \mathcal{Q}_{1}(M^{[p]}) $$
is the inductive limit of the sequence $B_{k}(M^{(k)})$, $k \in
\matN$, $k>0$, of Banach algebras. Further, the local ring
$$ \bigcup_{p \in \matN} \, \mathcal{Q}_{1}(M^{[p]}) \cap
   \mathcal{Q}_{1}(M)^{+} $$
is the inductive limit of the sequence
$$ B_{k}(M^{(k)}) \cap B_{k}(M)^{+}, \ \ \ k \in \matN, \ k>0, $$
of Banach algebras with norms
$$ \| f \|_{k} := \max \, \{ \| f \|_{M^{[k]},[-1/k,1/k],k}\, , \
   \| f \|_{M,[0,1/k],k} \}. $$
Clearly, the restriction operator
$$ R: \bigcup_{p \in \matN} \, \mathcal{Q}_{1}(M^{[p]}) \cap
   \mathcal{Q}_{1}(M)^{+} \longrightarrow \mathcal{Q}_{1}(M)^{+} $$
is continuous and injective by quasianalyticity. We must show that
$R$ is not surjective.

\vspace{1ex}

To proceed with {\em reduction ad absurdum}, suppose that $R$ is
surjective. By Grothendieck's version of the open mapping theorem
(cf.~\cite{Gro}, Chap.~4, Part~1, Theorem~2 or \cite{M-V},
Part~IV, Chap.~24), the operator $R$ is a homeomorphism onto the
image. Further, by Grothendieck's factorization theorem ({\em
loc.~cit.\/}), for each $k \in \matN$ there is an $l \in \matN$
and a constant $C>0$ such that
$$ R \left( B_{l}(M^{(l)}) \right) \supset R \left( B_{l}(M^{(l)}) \cap
   B_{l}(M)^{+} \right) \supset B_{k}(M)^{+}, $$
and
$$ \| R^{-1} f \|_{M^{(l)},[-1/l,1/l],l} \: \leq \:
   \| R^{-1} f \|_{l} \: \leq \: C \| f \|_{M,[0,1/k],k} $$
for all $f \in B_{k}(M)^{+}$. In particular, there is an $l \in
\matN$ and a constant $A>0$ such that
$$ R \left( B_{l}(M^{(l)}) \right) \supset B_{1}(M)^{+}, $$
and
$$ \| R^{-1} f \|_{M^{(l)},[-1/l,1/l],l} \: \leq \:
   A \| f \|_{M,[0,1],1} $$
for all $f \in B_{1}(M)^{+}$. In particular,
$$ \left| P\left(-\frac{1}{l}\right) \right| \leq A \| P
   \|_{M,[0,1],1} $$
for every polynomial $P \in \matC[x]$. Put
$$ W := \left\{ z \in \matC: \: \mbox{dist}\, (z,[0,1]) \leq \frac{1}{2l} \right\}
$$
and
$$ B := \sup \, \left\{  \frac{(2l)^{n}}{M_{n}}: \: n \in
   \matN \right\} < \infty; $$
the last inequality holds because \ $\mathcal{O}_{1} \varsubsetneq
\mathcal{Q}_{1}(M)$ \ whence
$$ \sup \, \left\{  \sqrt[n]{M_{n}}: \  n \in \matN \right\} = \infty. $$
It follows from Cauchy's inequalities that
$$ \sup \, \left\{ \left| P^{(n)}(x) \right| : x \in [0,1] \right\} \leq n!
   (2l)^{n} \sup \, \{ |P(x)|: x \in W \}, $$
and hence
$$ \| P \|_{M,[0,1],1} \leq B \sup \, \{ |P(x)|: x \in W \}.  $$
Consequently,
$$ \left| P\left(-\frac{1}{l}\right) \right| \leq AB \sup \, \{ |P(x)|: x \in W \} $$
for every polynomial $P \in \matC [x]$. But, by virtue of Runge
approximation, there exists a sequence of polynomials $P_{\nu} \in
\matC[x]$ which converges uniformly to $0$ on $W$, and to $1$ for
$x=-1/l$. This contradicts the above estimate, and thus the
theorem follows.
\end{proof}

\section{Construction of a counterexample}

We first recall some results on rectilinearization of terms from
our paper~\cite{Now3} (see also~[Section~4]~\cite{Now4}). We begin
with some suitable terminology. By a quadrant in $\matR^{m}$ we
mean a subset of $\matR^{m}$ of the form:
$$ \{ x = (x_{1}, \ldots, x_{m}) \in \matR^{m}: \; $$
$$ x_{i} =0, \, x_{j}>0, \, x_{k}<0 \ \ \mbox{ for } \ i \in
   I_{0}, \, j \in I_{+}, \, k \in I_{-} \}, $$
where $\{ I_{0}, I_{+}, I_{-} \}$ is a disjoint partition of $\{
1,\ldots,m \}$; its trace $Q$ on the cube $[-1,1]^{m}$ shall be
called a bounded quadrant. The interior $\mbox{Int}\,(Q)$ of the
quadrant $Q$ is its trace on the open cube $(-1,1)^{m}$. A bounded
closed quadrant is the closure $\overline{Q}$ of a bounded
quadrant $Q$, i.e.\ a subset of $\matR^{m}$ of the form:
$$ \overline{Q} := \{ x = (x_{1},\ldots,x_{m}) \in [-1,1]^{m}: \; $$
$$ x_{i} =0, \, x_{j} \geq 0, \, x_{k} \leq 0 \ \ \mbox{ for }
   \ i \in I_{0}, \, j \in I_{+}, \, k \in I_{-} \} . $$

By a normal crossing on a bounded quadrant $Q$ in $\matR^{m}$ we
mean here a function $g$ of the form
$$ g(x) = x^{\alpha} \cdot u(x), $$
where $\alpha \in \matN^{m}$ and $u$ is a function
$\mathcal{Q}$-analytic near $\overline{Q}$ which vanishes nowhere
on $\overline{Q}$. The announced results concerning
rectilinearization are the quasianalytic versions of Theorems~2
and ~3 on rectilinearization of terms from our
paper~\cite[Section~2]{Now3}, stated below (see
also~\cite[Section~4]{Now4}). Let $\mathcal{L}$ be the language of
restricted $\mathcal{Q}$-analytic functions augmented by the names
of the reciprocal function $1/x$ and roots.

%\vspace{2ex}

\begin{theorem}\label{recti1}
If $f: \matR^{m} \longrightarrow \matR$ is a function given
piecewise by a finite number of $\mathcal{L}$-terms, and $K$ is a
compact subset of $\matR^{m}$, then there exists a finite
collection of modifications
$$ \varphi_{i}: [-1,1]^{m} \longrightarrow \matR^{m}, \ \ \ \
i=1,\ldots,p, $$ such that

1) each $\varphi_{i}$ extends to a $\mathcal{Q}$-map in a
neighbourhood of the cube $[-1,1]^{m}$, which is a composite of
finitely many local blow-ups with smooth $\mathcal{Q}$-analytic
centers and power substitutions;

2) the union of the images $\varphi_{i}((-1,1)^{m})$,
$i=1,\ldots,p$, is a neighbourhood of $K$.

3) for every bounded quadrant $Q_{j}$, $j=1,\ldots,3^{m}$, the
restriction to $Q_{j}$ of each function $f \circ \varphi_{i}$,
$i=1,\ldots,p$, either vanishes or is a normal crossing or a
reciprocal normal crossing on $Q_{j}$.
\end{theorem}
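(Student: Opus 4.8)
The plan is to reduce the statement to the transformation of finitely many $\mathcal{Q}$-analytic functions into normal crossings — the limited resolution of singularities available in the category $\mathcal{Q}$ by virtue of the six defining conditions (cf.~\cite{BM,Rol-Spei-Wil}) — and then to peel off the reciprocals and roots occurring in the $\mathcal{L}$-terms one nesting layer at a time, by blow-ups and power substitutions. First I would reduce to a single finite collection of terms. Since $f$ is given piecewise by finitely many $\mathcal{L}$-terms, the pieces are cut out by finitely many conditions of the form $t \gtrless 0$ and $t \gtrless c$; rewriting comparisons with a constant $c$ through the polynomial $y \mapsto y-c$, these are again governed by $\mathcal{L}$-terms. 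Collecting the terms defining the values of $f$ together with all the terms defining the pieces into one finite family $\mathcal{T}$, it suffices to produce modifications that simultaneously rectilinearize every member of $\mathcal{T}$: once each such term is, on the interior of a given quadrant $Q_{j}$, a normal crossing $x^{\alpha}u$ (or a reciprocal normal crossing, or identically zero), its sign there is constant, so the piece-defining conditions are constant on $\mathrm{Int}(Q_{j})$ and $f\circ\varphi_{i}$ coincides with a single term on $Q_{j}$.

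The core is then an induction on the maximal depth $d$ of nesting of the reciprocal and root symbols along a branch of the term tree. For $d=0$ every term in $\mathcal{T}$ is a composition of restricted $\mathcal{Q}$-analytic functions, hence $\mathcal{Q}$-analytic near the relevant compact set by closure under composition; the transformation to normal crossings then yields finitely many composites of blow-ups with smooth $\mathcal{Q}$-analytic centers after which each such function is $x^{\alpha}u$ or $\equiv 0$ on each quadrant, which is exactly parts~1) and~3) in this case, while part~2) holds because blow-ups are proper and surjective onto a neighbourhood. For the inductive step I would look at the innermost reciprocals and roots: their arguments are $\mathcal{Q}$-analytic, so I first apply the $d=0$ construction to the finite family of these arguments, turning each into a normal crossing $x^{\alpha}u$. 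A reciprocal of such a monomial becomes the reciprocal normal crossing $x^{-\alpha}u^{-1}$, with $u^{-1}$ again $\mathcal{Q}$-analytic since $u$ is a unit in the local ring $\mathcal{Q}_{m}$; a $p$-th root becomes $x^{\alpha/p}u^{1/p}$, and a single power substitution $x_{j}=\xi_{j}^{p}$ with $p$ the least common multiple of all root indices clears the fractional exponents, while $u^{1/p}$ is $\mathcal{Q}$-analytic because $u$ is nonvanishing of constant sign near $\overline{Q}$ and $\mathcal{Q}$ is closed under composition. Thus, after these blow-ups and this power substitution, every innermost special subterm has become a normal crossing or a reciprocal normal crossing, the depth drops to $d-1$, and the induction hypothesis applies; composing the modifications preserves part~1) (closure under composition) and part~2) (properness and surjectivity of the power substitutions onto $[0,1]$ or $[-1,1]$).

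The main obstacle lies in making this depth reduction legitimate when a reciprocal occurs \emph{inside} a term, for then $x^{-\alpha}u^{-1}$ has poles and cannot be fed directly into the bounded domain of an outer restricted function, so it is not a $\mathcal{Q}$-analytic leaf of a depth-$(d-1)$ term. I would resolve this exactly as in~\cite{Now3} (following the subanalytic prototype of Denef and van den Dries~\cite{Denef-Dries}): as a preprocessing, split the domain according to whether a subterm $t$ satisfies $|t|\le 1$ or $|t|\ge 1$, replacing $t$ by its reciprocal $1/t\in[-1,1]$ on the second region, so that every restricted function is ultimately evaluated only on $[-1,1]$ and the reciprocals that arise are bounded, whence their composition with the outer $\mathcal{Q}$-analytic functions is again $\mathcal{Q}$-analytic and the depth genuinely drops. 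The new boundaries $\{t=\pm 1\}$ are zero sets of further $\mathcal{L}$-terms that one simply adjoins to $\mathcal{T}$, so the family stays finite. This controlled introduction of reciprocals is precisely why the conclusion must allow a reciprocal normal crossing as an alternative to a normal crossing.

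The other genuinely hard input, which I treat as a black box, is the transformation to normal crossings itself. In the quasianalytic category this cannot rely on Weierstrass preparation or on Noetherianity of the local rings $\mathcal{Q}_{m}$, both of which fail; instead one must invoke the (limited) resolution of singularities — transformation to normal crossings by blowing up with smooth centers — valid under the six axioms (cf.~\cite{BM,Rol-Spei-Wil}). Granting this, and combining it with the power-substitution bookkeeping of exponents, units and signs over the $3^{m}$ quadrants together with the domain-splitting above, I expect parts~1)--3) to follow; the estimates on roots proved in Theorem~\ref{substitution} guarantee that the classes involved are preserved under the power substitutions used here.
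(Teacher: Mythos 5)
The first thing to note is that this paper contains no proof of Theorem \ref{recti1} to compare against: the theorem is explicitly \emph{recalled} from the author's earlier work, being stated as the quasianalytic version of Theorems 2 and 3 on rectilinearization of terms from \cite{Now3} (see also Section 4 of \cite{Now4}). So the only meaningful comparison is with the strategy of those cited papers. Your overall route --- collecting the piece-defining terms and the value terms into one finite family, inducting on the nesting depth of the symbols $1/x$ and roots, invoking transformation to normal crossings by blow-ups as the black-box input, clearing fractional exponents by power substitutions, and using the Denef--van den Dries splitting $|t|\le 1$ versus $|t|\ge 1$ to keep reciprocals bounded --- is the expected line of argument and is consistent with the method of the cited papers, of which this statement is a direct import.

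There are, however, two concrete defects. First, your base case is false as stated: a restricted $\mathcal{Q}$-analytic function is extended by zero outside the cube, so a composite of such functions is in general not $\mathcal{Q}$-analytic --- not even continuous --- at points where an inner term reaches the boundary of the cube of the outer function. Hence already at depth $d=0$ you need the same domain-splitting (on $|t|\ge 1$ the outer restricted function evaluates to zero) that you reserve for the reciprocal layers; ``closure under composition'' alone does not make depth-zero terms $\mathcal{Q}$-analytic. Second, your closing appeal to Theorem \ref{substitution} is misplaced and signals a confusion about the paper's logical architecture: that theorem concerns the \emph{inverse} problem --- if $f(\xi^{p})$ lies in $\mathcal{Q}(I,M)$, then $f$ lies in the \emph{larger} class $\mathcal{Q}(I,M^{(p)})$ --- and it is used in this paper only to produce the contradiction in the counterexample of Section 4. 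The power substitutions occurring in rectilinearization act in the harmless direction $g \mapsto g(\xi^{p})$, which stays inside the fixed class by closure under composition, so no such estimates are needed; indeed they cannot be allowed, since the conclusion of Theorem \ref{recti1} never leaves the system $\mathcal{Q}$, whereas Theorem \ref{substitution} enlarges the class. A minor further point: part 2) does not follow from ``properness and surjectivity'' --- local blow-ups are not surjective --- but from the compactness of $K$ together with a finite covering argument, which is where the hypothesis on $K$ enters.
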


\begin{theorem}\label{recti2}
Let $U \subset \matR^{m}$ be a bounded open subset given piecewise
by $\mathcal{L}$-terms, and $f: U \longrightarrow \matR$ be a
function given piecewise by a finite number of
$\mathcal{L}$-terms. Then there exists a finite collection of
modifications
$$ \varphi_{i}: [-1,1]^{m} \longrightarrow \matR^{m}, \ \ \ \
i=1,\ldots,p, $$ such that

1) each $\varphi_{i}$ extends to a $\mathcal{Q}$-map in a
neighbourhood of the cube $[-1,1]^{m}$, which is a composite of
finitely many local blow-ups with smooth $\mathcal{Q}$-analytic
centers and power substitutions;

2) each set $\varphi_{i}^{-1}(U)$ is a finite union of bounded
quadrants in $\matR^{m}$;

3) each set $\varphi_{i}^{-1}(\partial U)$ is a finite union of
 bounded closed quadrants in $\matR^{m}$ of dimension $m-1$;

4) $U$ is the union of the images $\varphi_{i}(\mbox{\rm Int}\,
(Q))$ with $Q$ ranging over the bounded quadrants contained in
$\varphi_{i}^{-1}(U)$, $i=1,\ldots,p$;

5) for every bounded quadrant $Q$,  the restriction to $Q$ of each
function $f \circ \varphi_{i}$ either vanishes or is a normal
crossing or a reciprocal normal crossing on $Q$, unless
$\varphi_{i}^{-1}(U) \cap Q = \emptyset$. \hspace{\fill} $\Box$
\end{theorem}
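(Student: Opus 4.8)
The plan is to deduce Theorem~\ref{recti2} from the compact rectilinearization Theorem~\ref{recti1} by feeding into the latter not only $f$ but also every $\mathcal{L}$-term that describes $U$ and its frontier. Since $U$ and $f$ are given piecewise, I would first fix a finite family of $\mathcal{L}$-terms $g_{1},\ldots,g_{s}$ such that $U$ is a Boolean combination of the sign conditions $g_{\ell}>0$, $g_{\ell}<0$, $g_{\ell}=0$, and such that on each piece $f$ coincides with one of the $g_{\ell}$. As $U$ is bounded, $K:=\overline{U}$ is compact, and I would apply Theorem~\ref{recti1} simultaneously to the finite family $\{f,g_{1},\ldots,g_{s}\}$ and to $K$. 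This produces modifications $\varphi_{i}\colon[-1,1]^{m}\longrightarrow\matR^{m}$, $i=1,\ldots,p$, satisfying conclusion~(1), whose images $\varphi_{i}((-1,1)^{m})$ cover a neighbourhood of $\overline{U}$, and such that the restriction to every bounded quadrant $Q$ of each pullback $g_{\ell}\circ\varphi_{i}$ and of $f\circ\varphi_{i}$ either vanishes identically or is a normal crossing or a reciprocal normal crossing.

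The crux is then the \emph{triviality of signs on quadrants}. A normal crossing $x^{\alpha}u(x)$ has constant sign on the interior $\mbox{Int}(Q)$ of each bounded quadrant, since $u$ vanishes nowhere on $\overline{Q}$ and the monomial $x^{\alpha}$ has a fixed sign determined by $\alpha$ and the strict sign pattern of $Q$; the same holds for reciprocal normal crossings. Consequently each condition $g_{\ell}\circ\varphi_{i}>0$, $<0$ or $=0$ holds either on all of $\mbox{Int}(Q)$ or nowhere on it, so the Boolean combination defining $\varphi_{i}^{-1}(U)$ is, restricted to each $\mbox{Int}(Q)$, either everything or nothing. This gives conclusion~(2): $\varphi_{i}^{-1}(U)$ is a finite union of bounded quadrants. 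Conclusion~(4) follows formally: given $a\in U\subset\overline{U}$, choose $i$ with $a=\varphi_{i}(b)$, $b\in(-1,1)^{m}$, and let $Q$ be the unique bounded quadrant with $b\in\mbox{Int}(Q)$; then $\mbox{Int}(Q)\cap\varphi_{i}^{-1}(U)\neq\emptyset$, so by the sign triviality $\mbox{Int}(Q)\subset\varphi_{i}^{-1}(U)$ and $a\in\varphi_{i}(\mbox{Int}(Q))$ with $Q\subset\varphi_{i}^{-1}(U)$. Conclusion~(5) is read off directly from Theorem~\ref{recti1} applied to $f$, the caveat ``unless $\varphi_{i}^{-1}(U)\cap Q=\emptyset$'' absorbing those quadrants mapped outside $U$, where the reciprocal terms defining $f$ need not be defined.

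For the frontier, conclusion~(3), I would use that $\partial U\subset\bigcup_{\ell}\{g_{\ell}=0\}$, hence $\varphi_{i}^{-1}(\partial U)\subset\bigcup_{\ell}\{g_{\ell}\circ\varphi_{i}=0\}$. On each quadrant $Q$ the zero locus of a normal crossing $x^{\alpha}u$ is the union of the coordinate faces $\{x_{j}=0\}$ with $j\in\mbox{supp}(\alpha)$, so $\varphi_{i}^{-1}(\partial U)$ is contained in a union of faces of the quadrant stratification, i.e.\ a union of closed quadrants of dimension at most $m-1$. Matching this against the open-quadrant decomposition of $\varphi_{i}^{-1}(U)$ from~(2), the frontier is exactly the union of those $(m-1)$-dimensional closed faces separating an ``interior'' quadrant from an ``exterior'' one.

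The \emph{main obstacle} I anticipate is precisely conclusion~(3): ensuring that the pulled-back frontier is of \emph{pure} dimension $m-1$ and is genuinely a union of bounded closed quadrants, rather than acquiring spurious lower-dimensional strata. This forces the resolution underlying Theorem~\ref{recti1} to monomialize the frontier-defining terms \emph{simultaneously} with $f$, so that $\partial U$ is transformed into a normal-crossing divisor whose trace on each chart is a union of coordinate-subspace quadrants; the combinatorial bookkeeping needed to align the open-quadrant description of $U$ in~(2) with the closed-quadrant description of $\partial U$ in~(3) is where most of the work lies. By contrast, staying within the quasianalytic category (smooth $\mathcal{Q}$-analytic centres and power substitutions only) is automatic from the form of Theorem~\ref{recti1}, and the sign analysis, being local and elementary, presents no difficulty.
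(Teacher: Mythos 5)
A preliminary remark: the paper itself contains no proof of Theorem~\ref{recti2} --- it is quoted (note the $\Box$ closing the statement) as the quasianalytic version of Theorem~3 of \cite{Now3} (see also \cite[Section~4]{Now4}) --- so your argument has to stand entirely on its own. The part of it that reduces conclusions (1), (2), (4) and (5) to a simultaneous application of Theorem~\ref{recti1} (cf.\ Remark~\ref{rem4}) to $f$ together with the terms $g_{\ell}$ describing $U$ is sound: on every bounded quadrant each $g_{\ell}\circ\varphi_{i}$ is either identically zero or of constant nonzero sign, so Boolean combinations of sign conditions pull back to unions of quadrants, and your derivation of (2), (4), (5) from this is correct (after extending $f$ by zero off $U$ so that Theorem~\ref{recti1} applies to it).

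The genuine gap is conclusion (3), and your own text concedes it (``where most of the work lies'') without closing it. Three specific problems. First, the containment $\partial U\subset\bigcup_{\ell}\{g_{\ell}=0\}$ is false in general, because $\mathcal{L}$-terms are not continuous: the reciprocal carries the convention $1/0=0$ and restricted functions are cut off outside a box. For instance $g(x)=1/x-1$ gives $\{g>0\}=(0,1)$, yet $0\in\partial\{g>0\}$ while $g(0)=-1$. Second, even granting such a containment, $\{g_{\ell}\circ\varphi_{i}=0\}$ need not be a union of coordinate faces: by Theorem~\ref{recti1}(3) the restriction of $g_{\ell}\circ\varphi_{i}$ to a quadrant may fall in the ``vanishes'' branch, so this zero set can contain full-dimensional quadrants; your identification of zero loci with the faces $\{x_{j}=0\}$, $j\in\mathrm{supp}(\alpha)$, uses only the normal-crossing branch. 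Third, and most seriously, your final sentence identifies $\varphi_{i}^{-1}(\partial U)$ with the union of $(m-1)$-faces separating an ``interior'' quadrant from an ``exterior'' one, i.e., in effect with $\partial\bigl(\varphi_{i}^{-1}(U)\bigr)$; these sets differ, because $\varphi_{i}$ is a modification, not a homeomorphism. Take $U=\{x\in\matR^{2}: 0<x_{1}^{2}+x_{2}^{2}<1\}$ and let $\varphi_{i}$ involve the blow-up at the origin: the exceptional face lies in $\varphi_{i}^{-1}(\partial U)$ although both adjacent full-dimensional quadrants lie in $\varphi_{i}^{-1}(U)$, so your description omits it. The purity statement --- that $\varphi_{i}^{-1}(\partial U)$ is exactly a finite union of bounded closed quadrants of dimension $m-1$ --- is precisely what needs proof, and nothing in the statement of Theorem~\ref{recti1} yields it; as you note, one would have to re-enter the proof of Theorem~\ref{recti1} and arrange the resolution so that the frontier itself is monomialized, which is no longer a deduction from Theorem~\ref{recti1} but a reworking of its proof.
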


%\vspace{2ex}

\begin{remark}\label{rem4} The above theorems can be formulated for
simultaneous rectilinearization of several functions
$f_{1},\ldots,f_{s}$ given by terms. Also, a stronger version
({\em op.~cit.\/})\ holds if the function $f$ is continuous.
\end{remark}

\begin{remark}\label{rem5} Actually, if the function
$f$ is given piecewise by terms in the language of restricted
$\mathcal{Q}$-analytic functions augmented only by the reciprocal
function $1/x$, then one can require ({\em op.~cit.\/}) that the
modifications $\varphi_{i}$ in the above two theorems,
$i=1,\ldots,p$, be a composite of finitely many local blow-ups
with smooth $\mathcal{Q}$-analytic centers.
\end{remark}

\vspace{1ex}

{\bf Counterexample.} Now, we can readily give a counterexample
indicating that quasianalytic structures, unlike the classical
structure $\mathbb{R}_{an}$, may not admit quantifier elimination
in the language augmented merely by the reciprocal function $1/x$.
The example we construct is a plane curve through $0 \in
\matR^{2}$ which is definable in the quasianalytic structure
corresponding to the log-convex sequence
$$ M=(M_{n}), \ \ M_{n} := \frac{1}{\left( \log \log \, 3 \right)^{3} } \cdot
   \left( \log \log \, (n +3) \right)^{(n+3)}; $$
this sequence determines a quasianalytic class closed under
derivatives (cf.~example \ref{example}). By Theorem \ref{non2}, we
can take a function germ
$$ f \in \mathcal{Q}_{1}(M)^{+} \, \setminus \, \bigcup_{p \mbox{ \scriptsize odd}} \,
   \mathcal{Q}_{1}(M^{(p)}). $$
Let $V \subset \matR^{2}$ be the graph of a representative of this
germ in a right-hand side neighbourhood $[0,\varepsilon]$.

\vspace{1ex}

To proceed with {\em reductio ad absurdum\/}, suppose $V$ is given
by a term in the language of restricted $\mathcal{Q}_{M}$-analytic
functions augmented merely by the reciprocal function $1/x$.
Taking into account Remark~\ref{rem5}, we can thus deduce from
Theorem~\ref{recti2} that there would exist a rectilinearization
of this term by a finite sequence of blow-ups of the real plane at
points. Consequently, the germ of $V$ at zero would be contained
in the image $\varphi([-1,1])$, where
$$ \varphi =(\varphi_{1},\varphi_{2}): [-1,1]
   \longrightarrow \matR^{2}, \ \ \varphi(0)=0, $$
is a $\mathcal{Q}_{M}$-analytic homeomorphism. But then the order
of $\varphi_{1}$ at zero must be odd, and thus the set $V$ would
have a parametrization near zero of the form $(\xi^{p}, g(\xi))$,
where $p$ is an odd positive integer and $g$ is a
$\mathcal{Q}_{M}$-function in the vicinity of zero. Hence and by
Theorem \ref{substitution}, we would get
$$ f(x) = g(x^{1/p}) \in \mathcal{Q}_{1}(M^{(p)}), $$
which is a contradiction.

%\vspace{2ex}

\begin{remark} By virtue of Puiseux's theorem for definable functions
(cf.~\cite{Now5}, Section~2), the germ of every smooth function in
one variable that is definable in the structure
$\mathbb{R}_{\mathcal{Q}_{M}}$ belongs to
$\mathcal{Q}_{1}(M^{(p)})$ for some positive integer $p$.
Therefore the structure under study will not admit quantifier
elimination, even considered with the richer language of
restricted definable quasianalytic functions augmented by the
reciprocal function $1/x$.
\end{remark}

\begin{remark}
Also, our counterexample demonstrates that the classical theorem
of \L{}ojasiewicz~\cite{Loj} that every subanalytic set of
dimension $\leq 1$ is semianalytic is no longer true in
quasianalytic structures.
\end{remark}

\vspace{2ex}

{\bf Acknowledgements.} This research was partially supported by
Research Project No.\ N N201 372336 from the Polish Ministry of
Science and Higher Education.

%\vspace{5ex}

\vspace{4ex}

\begin{small}
Institute of Mathematics, \ Jagiellonian University

ul.~Profesora \L{}ojasiewicza 6, \ 30-348 Krak\'{o}w, Poland

e-mail:
\email{\textcolor[rgb]{0.00,0.00,0.84}{nowak@im.uj.edu.pl}}

\end{small}

\end{document}